\newtheorem{theorem}{\bf Theorem}[section]
\newtheorem{conj}[theorem]{\bf Conjecture}
\newtheorem{claim}[theorem]{\bf Claim}
\theoremstyle{definition}
\newtheorem{defi}[theorem]{\bf Definition}
\DeclareMathOperator{\ex}{ex}
\title{The Turán number of blow-ups of trees}
\author{Andrzej Grzesik\thanks{Faculty of Mathematics and Computer Science, Jagiellonian University, Łojasiewicza 6, 30-348 Krak\'{o}w, Poland. E-mail: {\tt Andrzej.Grzesik@uj.edu.pl}. The work of this author has received funding from the European Research Council (ERC) under the European Union’s Horizon 2020 research and innovation programme (grant agreement No~648509). This publication reflects only its authors' view; the European Research Council Executive Agency is not responsible for any use that may be made of the information it contains.} \and 
	Oliver Janzer\thanks{Department of Pure Mathematics and Mathematical Statistics, University of Cambridge, United Kingdom.
		E-mail: {\tt oj224@cam.ac.uk}.} \and
	Zoltán Lóránt Nagy\thanks{MTA--ELTE Geometric and Algebraic Combinatorics Research Group,
  E\"otv\"os Lor\'and University, Budapest, Hungary. The author is supported by the Hungarian Research Grant (NKFI) No. K 120154 and by the ÚNKP-18-4 New National Excellence Program  of the Ministry of Human Capacities(Bolyai+). 	E-mail: {\tt nagyzoli@cs.elte.hu}}} 
\date{}
\begin{document}

\maketitle

\begin{abstract}
A conjecture of Erdős from 1967 asserts that any graph on $n$ vertices which does not contain a fixed $r$-degenerate bipartite graph $F$ has at most $Cn^{2-1/r}$ edges, where $C$ is a constant depending only on $F$. We show that this bound holds for a large family of \hbox{$r$-degenerate} bipartite graphs, including all $r$-degenerate blow-ups of trees. Our results generalise many previously proven cases of the Erd\H{o}s conjecture, including the related results of Füredi and Alon, Krivelevich and Sudakov. Our proof uses supersaturation and a random walk on an auxiliary graph.

{\bf Keywords}: Turán-number, blow-up, extremal graph, random walks, graph embedding, complexity
\end{abstract}

\section{Introduction}
For a simple graph $F$, the Tur\'an number  $\ex(n, F)$ is defined as the maximum number of edges in an $n$-vertex simple graph not containing $F$ as a subgraph. While this function is well understood for graphs with chromatic number $\chi(F)$ larger than $2$, in case of bipartite graphs $F$ not even the order of magnitude is known in general. We refer to the detailed survey of F\"uredi and Simonovits~\cite{Furedi-Simonovits} on the subject. 

The K\H{o}v\'ari-S\'os-Tur\'an theorem \cite{KST} states that if $F$ is the complete bipartite graph $K_{r,t}$ for $r\le t$, then $\ex(n, F) = O(n^{2-1/r})$. 
A graph $F$ is called {\em $r$-degenerate} if each of its subgraphs has minimum degree at most $r$.
Generalising the K\H{o}v\'ari-S\'os-Tur\'an theorem, Erd\H{o}s in 1967 proposed the following conjecture. 

\begin{conj}[Erdős \cite{Erdos}]\label{Erd-conj}
Let $F$ be a bipartite  $r$-degenerate graph. Then  $\ex(n, F)=O(n^{2-\frac{1}{r}})$.
\end{conj}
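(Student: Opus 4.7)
The plan is to attack the conjecture by combining an $r$-degenerate ordering of $F$ with a greedy embedding strategy, using dependent random choice as the main probabilistic tool. Fix an ordering $v_1, v_2, \ldots, v_m$ of $V(F)$ witnessing $r$-degeneracy, so that each $v_i$ has at most $r$ neighbours among $\{v_1, \ldots, v_{i-1}\}$, and let $G$ be an $n$-vertex graph with $e(G) \geq C n^{2-1/r}$ for a sufficiently large constant $C = C(F)$. The goal is to produce an edge-preserving embedding $\phi \colon V(F) \hookrightarrow V(G)$.

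First I would use dependent random choice to find a ``rich'' set $U \subseteq V(G)$ in which every $r$-element subset has many common neighbours. Concretely, sample a small number of vertices of $V(G)$ uniformly at random and take $U$ to be the intersection of their neighbourhoods; a convexity computation from the edge count, followed by a Markov/union-bound deletion step that removes vertices sitting inside ``bad'' $r$-subsets (those whose common neighbourhood is smaller than $m := |V(F)|$), should produce a set $U$ of sizeable cardinality such that every $r$-subset of $U$ has at least $m$ common neighbours in $G$.

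The second step is to embed $F$ greedily along the chosen ordering. Place an appropriate initial segment $\phi(v_1), \ldots, \phi(v_k)$ inside $U$, and for each subsequent $v_i$ let $x_1, \ldots, x_j$ with $j \leq r$ denote the images of its back-neighbours and choose $\phi(v_i)$ from the common neighbourhood of $\{x_1, \ldots, x_j\}$, avoiding previously used vertices. The property of $U$ guarantees that this common neighbourhood has size at least $m$, so the choice always succeeds.

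The main obstacle, and the reason the conjecture has resisted attack, is that the parameters of dependent random choice do not balance in the required regime: to guarantee common neighbourhoods of size $m$ for every $r$-subset one must take enough samples to shrink $U$ to essentially constant size, while keeping $|U|$ growing with $n$ forces weaker control that is incompatible with $r$-degenerate embeddings. Equivalently, the greedy embedding demands that every back-neighbour of every later vertex already lie in $U$, but a vertex $v_i$ of general bipartite $r$-degenerate $F$ may possess both many back-neighbours (forcing $\phi(v_i)$ into a small common neighbourhood) and many forward-neighbours (forcing $\phi(v_i)$ itself to sit in a rich set for future lookups). Reconciling these two constraints would amount to proving the full conjecture; the natural ways to try to do so are an iterated dependent random choice yielding a nested hierarchy $U_1 \supseteq U_2 \supseteq \cdots$ of sets with progressively stronger common-neighbourhood guarantees, or a supersaturation argument followed by an averaging/random-walk step on an auxiliary graph that exploits global structural features of $F$. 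The latter approach is the one the present paper develops, in a form powerful enough to cover all $r$-degenerate blow-ups of trees and to subsume the theorems of F\"uredi and of Alon--Krivelevich--Sudakov, even though it stops short of settling the conjecture in full.
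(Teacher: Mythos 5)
The statement you were asked about is labelled as a \emph{conjecture} in the paper, and the paper contains no proof of it; it only establishes special cases (graphs of complexity at most $2$, $(r,t)$-blownup trees, and $r$-degenerate blow-ups of trees). So there is no proof in the paper to compare yours against, and your proposal --- as you yourself state in its final paragraph --- does not prove the statement either. To that extent your self-assessment is accurate and honest, but what you have written is not a proof, and it would be a mistake to present it as one.

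The concrete gap is exactly the one you identify: in the dependent random choice framework, guaranteeing that \emph{every} $r$-subset of the rich set $U$ has at least $|V(F)|$ common neighbours forces the deletion step to remove so many vertices that $|U|$ cannot be kept large enough to host an arbitrary initial segment of an $r$-degenerate ordering, and the greedy continuation breaks down because a later vertex $v_i$ may simultaneously need to land in a small common neighbourhood (because of its back-neighbours) and to serve as a member of a rich $r$-set for its forward-neighbours. No amount of parameter tuning in the single-step version resolves this tension; this is precisely why the conjecture is open and why the known general bound (Alon--Krivelevich--Sudakov) only achieves exponent $2-\frac{1}{4r}$. The paper's actual contribution is to sidestep this for structured $F$: it uses supersaturation of $K_{r,r}$ to build an auxiliary graph $\mathcal{G}$ on the $r$-sets of the host graph, starts the embedding from the stationary distribution of the random walk on $\mathcal{G}$, and exploits the fact that one step of the walk from the stationary distribution is dominated by the stationary distribution (so the image $r$-sets stay out of the low-degree ``bad'' set with high probability). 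This works when $F$ sits inside an $(r,t)$-blownup tree, because then the embedding only ever needs to branch from $r$-sets contained in previously embedded $t$-sets, but it does not extend to general $r$-degenerate bipartite $F$. If you want to contribute here, the productive move is to prove one of the theorems the paper actually establishes, not the conjecture itself.
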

Note that this conjecture would be tight due to the results of Alon, Rónyai and Szabó~\cite{ARSz} and Kollár, Rónyai and Szabó~\cite{KRSz} on the Turán number of complete bipartite graphs $K_{r,s}$, where $s>(r-1)!$.

The first partial result towards this conjecture which also proved a weaker conjecture of Erdős was obtained by Füredi. In fact, this was only implicit in \cite{Furedi}.

\begin{theorem}[Füredi \cite{Furedi}]\label{semireg}
Let $F$ be bipartite graph with maximum degree at most $r$ on one side. Then there exists a constant $C$ depending only on $F$ for which $\ex(n, F)\leq Cn^{2-\frac{1}{r}}$.
\end{theorem}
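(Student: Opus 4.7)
The plan is to prove the theorem via the dependent random choice technique. Write the bipartition of $F$ as $A \cup B$ with $|A| = a$, $|B| = b$, where $\deg_F(u) \le r$ for every $u \in A$; after discarding isolated vertices of $B$ we have $b \le r a$, so $a$ and $b$ depend only on $F$. Fix $G$ on $n$ vertices with $e(G) \ge C n^{2 - 1/r}$ for a constant $C = C(F)$ to be chosen.

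The key reduction is to locate a set $W \subseteq V(G)$ with $|W| = b$ such that every $r$-subset of $W$ has at least $a + b$ common neighbours in $G$. Given such a $W$, one embeds $F$ greedily: fix an arbitrary bijection $g : B \to W$, and then process the vertices $u_1, \dots, u_a$ of $A$ in turn, taking $f(u_i)$ to be any vertex of $N_G(g(N_F(u_i)))$ lying outside $W \cup \{f(u_1), \dots, f(u_{i-1})\}$. Because $|N_F(u_i)| \le r$, the set $g(N_F(u_i))$ sits inside an $r$-subset of $W$ and hence has common $G$-neighbourhood of size at least $a + b$; the forbidden set has size at most $b + a - 1 < a + b$, so a valid choice always exists.

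To produce $W$, I would sample $x_1, \dots, x_r \in V(G)$ independently and uniformly, and set $U = \bigcap_{i=1}^{r} N_G(x_i)$. Writing $\bar d = 2 e(G)/n \ge 2 C n^{1 - 1/r}$ and invoking convexity of $x \mapsto x^r$,
\[
\mathbb{E}|U| \;=\; \sum_{v \in V(G)} \left( \frac{d_G(v)}{n} \right)^{\!r} \;\ge\; n \left( \frac{\bar d}{n} \right)^{\!r} \;=\; \frac{\bar d^{\,r}}{n^{r-1}} \;\ge\; (2C)^r.
\]
Call an $r$-subset $R \subseteq V(G)$ \emph{light} if $|N_G(R)| < a + b$. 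Then
\[
\mathbb{E}\bigl[\#\{\text{light } r\text{-subsets of } U\}\bigr] \;=\; \sum_{R \text{ light}} \left( \frac{|N_G(R)|}{n} \right)^{\!r} \;\le\; \binom{n}{r} \left( \frac{a + b}{n} \right)^{\!r} \;\le\; \frac{(a + b)^r}{r!}.
\]
Choosing $C$ large enough that $(2C)^r - (a+b)^r/r! \ge b$, some realisation of $(x_1,\dots,x_r)$ satisfies $|U|$ minus the number of light $r$-subsets of $U$ at least $b$; deleting one vertex from each light $r$-subset of $U$, and then trimming down to exactly $b$ vertices, yields the required $W$.

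The only delicate point is the choice of the threshold $a + b$ in the definition of ``light'': it is precisely what enables the greedy embedding to dodge both the image of $B$ and the previously placed $A$-vertices, and it costs only a larger value of $C(F)$. I do not anticipate a substantive obstacle beyond the bookkeeping above; the proof is essentially a direct application of dependent random choice, exploiting the fact that the bounded-degree side of $F$ only ever asks for common neighbourhoods of sets of size at most $r$.
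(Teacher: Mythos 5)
Your argument is correct, and it is essentially the Alon--Krivelevich--Sudakov dependent random choice proof that the paper cites as a reproof of F\"uredi's theorem; it is, however, a genuinely different route from the one the paper itself takes. The paper never proves Theorem~\ref{semireg} directly: it deduces it from Theorem~\ref{main2}, using the observation that a bipartite $F$ whose side $X$ has maximum degree at most $r$ embeds into an $(r,t)$-blownup tree with $t=|Y|$, and Theorem~\ref{main2} is proved via Erd\H{o}s--Simonovits supersaturation (yielding a dense auxiliary graph $\mathcal{G}$ on the $r$-sets of $V(G)$) together with a random greedy embedding whose distribution is dominated by the stationary distribution of the random walk on $\mathcal{G}$. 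Your approach buys brevity and self-containedness for this particular statement --- one sample of $r$ independent uniform vertices, linearity of expectation, and a greedy embedding --- but it is tied to the hypothesis that a single set $W$ can host the whole side $B$ at once; that is exactly what breaks for the higher-complexity targets (blow-ups of trees, $(r,t)$-blownup trees) for which the paper's supersaturation/random-walk machinery, though heavier, finds the required ``rich'' $r$-sets adaptively, branch by branch. Two bookkeeping points in your write-up deserve a sentence each: if $b<r$ the phrase ``sits inside an $r$-subset of $W$'' is vacuous, but you can pad $B$ with isolated vertices (or run the argument with $r$ replaced by $b$, which only improves the exponent) at the cost of enlarging $C$; and the isolated vertices of $B$ that you discard at the start must be re-embedded at the end, which is immediate for $n$ large and can be absorbed into $C$ otherwise. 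Neither affects the validity of the proof.
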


This was reproved using the celebrated {\em dependent random choice} method by Alon, Krivelevich and Sudakov \cite{Alon}, see also \cite{Fox}.
They used their techniques to prove the following result as well, which provides a general but weaker bound on the Turán function than Conjecture~\ref{Erd-conj}.

\begin{theorem}[Alon, Krivelevich, Sudakov \cite{Alon}]
Let $F$ be a bipartite  $r$-degenerate graph. Then  $\ex(n, F)=O(n^{2-\frac{1}{4r}})$.
\end{theorem}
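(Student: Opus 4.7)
My plan is to follow the dependent random choice strategy of Alon, Krivelevich, and Sudakov. Let $F$ be a bipartite $r$-degenerate graph on $v = |V(F)|$ vertices, fix a degenerate ordering $w_1, \ldots, w_v$ in which each $w_i$ has at most $r$ neighbours among $w_1, \ldots, w_{i-1}$, and assume that $G$ is an $n$-vertex graph with $e(G) \geq C n^{2-1/(4r)}$ for a sufficiently large constant $C = C(F)$. The first step is to sample a multiset $T = \{t_1, \ldots, t_{2r}\} \subseteq V(G)$ uniformly with replacement and consider $W := \bigcap_i N_G(t_i)$. By Jensen's inequality applied to the degree sequence, whose average satisfies $\bar d \geq 2 C n^{1 - 1/(4r)}$, one has $E[|W|] \geq n (\bar d/n)^{2r} \geq (2C)^{2r} n^{1/2}$. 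Simultaneously, any ``bad'' $r$-subset $S \subseteq V(G)$ with $|N(S)| < v$ satisfies $\Pr[S \subseteq W] \leq (v/n)^{2r}$, so the expected number of bad subsets contained in $W$ is at most $\binom{n}{r}(v/n)^{2r} = o(1)$. Deleting one vertex per bad subset yields $W \subseteq V(G)$ with $|W| \geq c_1 n^{1/2}$ in which every $r$-subset has at least $v$ common neighbours in $V(G)$.

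The next step would be to upgrade $W$ into a bipartite-like substructure: two disjoint sets $A, B \subseteq V(G)$, each of size at least $v$, such that every $r$-subset of $A$ has at least $v$ common neighbours in $B$ and every $r$-subset of $B$ has at least $v$ common neighbours in $A$. A natural route is to pass first to a random bipartite subgraph of $G$ (losing only a constant factor in the edge count), run the sampling above from one side to extract a set $B$ on the other side with the desired property, and then perform a second, symmetric application to extract $A$. With $A$ and $B$ in hand, I would embed $F$ greedily: send the two parts of $F$ to $A$ and $B$, and process $w_1, \ldots, w_v$ in the degenerate order. Because $F$ is bipartite, the at most $r$ back-neighbours of $w_i$ lie in the opposite part of $F$, hence their images are already in the opposite side of $(A, B)$; the common-neighbour property then supplies at least $v$ candidate images for $w_i$ on the correct side, of which fewer than $v$ are blocked by earlier embeddings, so an unused image always exists.

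The main obstacle is the bipartite refinement, because the sampling naturally produces common neighbourhoods inside $V(G)$ rather than inside $A$ or $B$, and the second round must be carried out carefully so that the guarantees land on the correct side. The exponent $1/(4r)$ is forced by the choice $s = 2r$ in the sampling: this is exactly the value that keeps the bad-subset count $\binom{n}{r}(v/n)^{s} = o(1)$ while making $|W|$ of order $n^{1/2}$, the scale needed for the second round to succeed. This is the contrast with the semi-regular setting of Theorem \ref{semireg}, where one side of $F$ already has maximum degree at most $r$ and a single application of dependent random choice supplies every common neighbourhood needed, giving the sharper exponent $2 - 1/r$.
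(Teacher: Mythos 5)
This theorem is not proved in the paper at all: it is quoted from Alon, Krivelevich and Sudakov \cite{Alon}, so the only comparison available is with their dependent random choice argument, which is indeed the strategy you are aiming at. Your first step (sampling $2r$ vertices, bounding $\mathbb{E}[|W|]\geq n(\bar d/n)^{2r}\geq (2C)^{2r}n^{1/2}$ by convexity, and killing the at most $\binom{n}{r}(v/n)^{2r}=o(1)$ bad $r$-subsets) is correct and standard, and your final embedding lemma (greedy embedding in a degenerate order once you have disjoint $A,B$ with every $r$-subset of each having at least $v$ common neighbours in the other) is also fine.

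The genuine gap is exactly the step you flag as ``the main obstacle'' and then only gesture at: producing the pair $(A,B)$ with the \emph{mutual} richness property. A ``second, symmetric application'' of the same sampling does not do this, for two reasons. First, dependent random choice only controls common neighbourhoods of subsets of the \emph{newly extracted} set inside the \emph{ambient} ground set; it says nothing about $r$-subsets of the previously fixed set $B$ having common neighbours inside the new set $A$, which is half of what you need. Second, the quantitative side fails: after round one all you know about the bipartite graph between $B$ and the other side is that each $r$-subset of $B$ has at least $b_1\lesssim n^{1/2+1/(4r)}$ common neighbours there, so the relevant density is at most about $n^{-1/2+1/(4r)}$. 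Sampling $s$ vertices of $B$ then gives an expected common neighbourhood of size about $n^{1-s(1/2-1/(4r))}$, while the union bound over the $\binom{n}{r}$ potential bad $r$-sets contributes about $n^{r-s/2}$; beating the latter forces $s>4r(r-1)$, at which point the expected size is $n^{1-s(1/2-1/(4r))}<1$ for every $r\geq 2$. So no choice of $s$ yields even a single vertex of $A$ by this route, let alone the mutual property. This two-sided (or ``self-referential'') richness is the heart of the Alon--Krivelevich--Sudakov proof and is obtained by a more careful analysis of the random choice itself rather than by two independent rounds; it is precisely the point where the degenerate case departs from Theorem~\ref{semireg}, and as it stands your write-up assumes the hard part rather than proving it. (Note also that the present paper's machinery cannot substitute here: its $(r,t)$-blownup trees do not contain general $r$-degenerate bipartite graphs.)
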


Recently, Conlon and Lee \cite{Conlon}, and the second author \cite{JanzerO} improved Theorem \ref{semireg} when $r=2$, showing that the exponent is always smaller than $2-1/2$ except when $F$ contains the complete bipartite graph $K_{2,2}$ as a subgraph. They studied the Turán function of the subdivisions of complete graphs on at least $3$ vertices. Note that any $K_{2,2}$-free bipartite graph with maximum degree at most $2$ on one side is a subgraph of the subdivision of a sufficiently large complete graph.
The second author proved that the Tur\'an function of the subdivision of $K_t$ is $O(n^{3/2-\frac{1}{4t-6}})$, which is tight for $t=3$.

Concerning the case $r\geq 2$, another type of extension is due to Füredi and West \cite{Furedi-West}, who confirmed  $\ex(K_{s,s}\setminus K_{s-r, s-r})= O(n^{2-1/r})$, yet another weaker conjecture of Erdős, along their proof of a Ramsey-type result.  Here the forbidden graph is obtained from the complete bipartite graph $K_{s,s}$ by deleting a complete bipartite subgraph $ K_{s-r, s-r}$.

Observe that there exists a permutation of the vertices $\{v_1, v_2, \ldots, v_k\}$ of any $r$-degenerate graph~$F$, for which every vertex $v_i$ has at most $r$ neighbours in the set  $\{v_1, v_2, \ldots, v_{i-1}\}$. With this in mind, one can define the complexity  of an $r$-degenerate graph as follows.
\begin{defi} {
The graph $K_{r,r}=G(A_0, B_0)$ is considered as a graph of complexity $0$ and any multiplicity. A bipartite graph $G(A, B)$ is a \emph{complete $r$-degenerate bipartite graph of complexity $s$ and multiplicity $m$} if it can be obtained from the complete bipartite graph $G(A', B')$ of complexity $s-1$ and multiplicity $m$ by the addition of further $m(\binom{|A'|}{r}+\binom{|B'|}{r})$ vertices such that $m$ new vertices are assigned to each $r$-set in $A'$ and each $r$-set in $B'$, and every new vertex is connected to the vertices of the $r$-set that it is assigned to.
The \emph{complexity} of an $r$-degenerate bipartite graph~$F$ is defined to be the smallest possible complexity of a complete $r$-degenerate bipartite graph (of arbitrary multiplicity) that contains $F$ as a subgraph. }
\end{defi}

\begin{figure}[h!]
\centering
  \includegraphics[width=13cm]{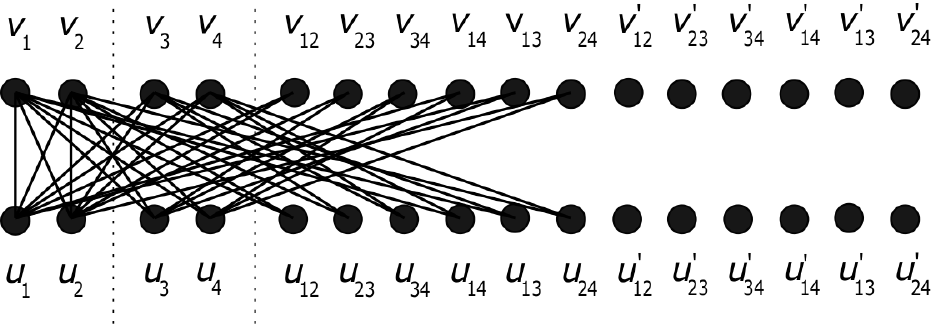}
  
  \caption{\label{complex} The complete $2$-degenerate bipartite graph of complexity $2$ and multiplicity $2$. Note that the clone $v'$ of $v$ has the same neighbours, but we did not draw those edges in order to keep the figure transparent.}

 \end{figure}
 
\begin{figure}[h!]
\centering
  \includegraphics[width=13cm]{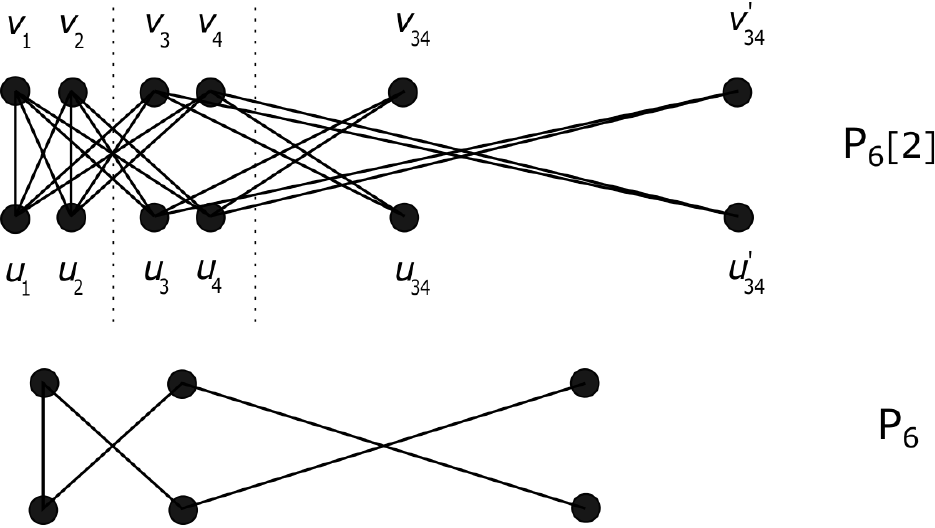}
  
  \caption{\label{path} The blow-up $P_6[2]$ of a path on $6$ vertices, as a subgraph of the complete $2$-degenerate bipartite graph of complexity $2$ and multiplicity $2$.}

 \end{figure}


Note that the result of F\"uredi and West covers precisely the complexity $1$ case, while Theorem \ref{semireg} only applies to some $r$-degenerate bipartite graphs of complexity at most $2$.

 
Our first contribution is a proof of Conjecture \ref{Erd-conj} for all graphs of complexity at most $2$.

\begin{theorem} \label{complexity2}
    Let $F$ be a complete $r$-degenerate bipartite graph of complexity $2$ and arbitrary multiplicity. Then $$\ex(n,F)=O(n^{2-\frac{1}{r}}).$$
\end{theorem}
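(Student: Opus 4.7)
Let $f = |V(F)|$. Recall $F$ has parts $A_2, B_2$ built in two stages from an initial $K_{r,r}$ on $A_0 \cup B_0$: first $m$ clones are added on each side to form $A_1 \supseteq A_0$, $B_1 \supseteq B_0$; then, for each $r$-subset $S$ of $A_1$ or $B_1$, a further $m$ vertices attached to $S$ are appended. My plan is to combine a supersaturation / dependent-random-choice step with a greedy embedding in exactly this order, exploiting a symmetric common-neighborhood property on both sides of the host graph.

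Assuming $e(G) \ge Cn^{2-1/r}$ for a sufficiently large $C = C(F)$, I would first seek disjoint subsets $A, B \subseteq V(G)$, each of size tending to infinity with $n$, such that (i) every $r$-subset of $A$ has at least $T$ common neighbors in $B$, and (ii) every $r$-subset of $B$ has at least $T$ common neighbors in $A$, where $T$ is a sufficiently large constant compared to $f$. Granted (i) and (ii), the embedding is routine. Pick any $A_0 \subseteq A$ of size $r$; by (i), pick $B_0 \subseteq B$ as $r$ of its common neighbors in $B$, so that $A_0 \cup B_0$ spans a $K_{r,r}$. Next, apply (ii) to $B_0$ to add $m$ vertices $a_1, \dots, a_m \in A \setminus A_0$ adjacent to all of $B_0$, and (i) to $A_0$ to add $m$ vertices $b_1, \dots, b_m \in B \setminus B_0$ adjacent to all of $A_0$, producing the complexity-$1$ skeleton with $A_1 \subseteq A$, $B_1 \subseteq B$. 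Finally, for each of the $\binom{r+m}{r}$ $r$-subsets $S \subseteq A_1$, (i) yields at least $T \gg f$ common neighbors in $B$, so $m$ fresh vertices of $N(S)$ (avoiding the $O(f)$ already used) can be selected, and symmetrically for $r$-subsets of $B_1$.

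The main obstacle will be producing $A$ and $B$ satisfying (i) and (ii) simultaneously. Classical dependent random choice returns a single set $U$ where every $r$-subset has many common neighbors in $V(G)$, but this one-sided output does not directly give the symmetric, cross-sided guarantee required by the embedding. I expect this is precisely where the random walk on an auxiliary graph advertised in the abstract enters the picture: the auxiliary graph should have $r$-subsets of $V(G)$ as vertices with edges encoding shared common neighbors, and the sets $A, B$ should be extracted from the stationary behavior of the walk, so that with high probability the resulting pair satisfies both (i) and (ii). Establishing this is the crux of the argument; once it is in hand, the embedding and the bound $\ex(n,F) = O(n^{2-1/r})$ follow quickly.
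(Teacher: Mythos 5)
Your reduction is the right one, and it matches the paper's: it suffices to embed the complexity-$1$ skeleton on vertices $u_1,\dots,u_{r+m}$, $v_1,\dots,v_{r+m}$ so that every $r$-subset of $\{u_1,\dots,u_{r+m}\}$ and of $\{v_1,\dots,v_{r+m}\}$ has at least $|V(F)|$ common neighbours in $G$; the complexity-$2$ layer then goes in greedily. The problem is that the step you yourself identify as the crux --- producing the symmetric, two-sided common-neighbourhood structure --- is left entirely unproved, and it is the whole content of the theorem. As written, you have reduced the statement to an unestablished lemma (disjoint sets $A,B$ of unbounded size such that \emph{every} $r$-subset of $A$ has many common neighbours in $B$ and vice versa), and you only conjecture that an auxiliary-graph random walk should deliver it. Moreover, this lemma is strictly stronger than what is needed: the paper never produces sets in which all $r$-subsets are good, and it is not clear that your version is attainable by these means (a two-sided strengthening of dependent random choice of this kind is essentially the known obstruction to the general Erd\H{o}s conjecture).

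What the paper actually does at this point is the following, and it only controls the few $r$-subsets that the embedding uses. Supersaturation gives $\gamma\binom{n}{r}$ copies of $K_{r,r}$ once $e(G)>c n^{2-1/r}$, so the auxiliary graph $\mathcal{G}$ on $r$-sets (with $U\sim V$ iff $U,V$ span a complete bipartite graph in $G$) has average degree at least $2\gamma$. One samples a uniformly random edge of $\mathcal{G}$ with endpoints $X,Y$ in random order, so that $\mathbb{P}(X=U)=d_{\mathcal{G}}(U)/(2e(\mathcal{G}))$ is the stationary distribution, and then picks the $m$ extra vertices on each side uniformly from $N_G(X)\setminus Y$ and $N_G(Y)\setminus X$. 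The key observation is that any fixed $r$-subset of the resulting $r+m$ vertices on, say, the $v$-side is a uniformly random $\mathcal{G}$-neighbour of $X$, and a one-step computation shows its law is dominated by the stationary distribution; hence the probability that it lands in the set of $r$-sets of $\mathcal{G}$-degree at most $\bar{d}(\mathcal{G})/(4\binom{r+m}{r})$ is at most $1/(4\binom{r+m}{r})$, and a union bound over the $2\binom{r+m}{r}$ relevant $r$-subsets finishes the proof, since $r$-sets outside this low-degree class have at least $|V(F)|$ common neighbours in $G$. To complete your proposal you would need to carry out an argument of this kind (or prove your stronger lemma); at present the proof has a genuine gap exactly where the new idea of the paper lies.
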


Our next result concerns the case where  $F$ has larger complexity but has a strong structure, namely where $F$ is a blow-up of a tree, see Figure \ref{path}.

\begin{theorem}\label{main1} Let $T$ denote a tree and let $T[r]$ denote its blow-up, where every vertex is replaced by an independent set of $r$ vertices, and the copies of two vertices are adjacent if and only if the originals are. Then $$\ex(n, T[r])=O (n^{2-\frac{1}{r}}).$$ 
\end{theorem}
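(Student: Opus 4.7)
Fix $T$ and $r$, and suppose $G$ has $n$ vertices and at least $Cn^{2-1/r}$ edges for a large constant $C = C(T,r)$. By iteratively removing low-degree vertices, we may pass to an induced subgraph with minimum degree $\Omega(n^{1-1/r})$, and work with this subgraph throughout. Root $T$ at an arbitrary vertex $t_0$ and order the vertices $t_0, t_1, \ldots, t_{k-1}$ in BFS order, so each $t_i$ with $i \geq 1$ has a parent $t_{p(i)}$ with $p(i) < i$. The aim is to find pairwise disjoint $r$-subsets $V_{t_0}, \ldots, V_{t_{k-1}} \subseteq V(G)$ such that $V_{t_i} \cup V_{t_{p(i)}}$ spans a copy of $K_{r,r}$ in $G$ for every $i \geq 1$.

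Call an $r$-subset $R \subseteq V(G)$ \emph{rich} if $|N_G(R)| \geq T_0$, where $T_0 = T_0(T,r)$ is a constant larger than $10rk$. Convexity applied to the identity
\[
\sum_{R \in \binom{V(G)}{r}} |N_G(R)| = \sum_{v \in V(G)} \binom{d_G(v)}{r},
\]
together with the minimum-degree bound, shows that the average codegree of an $r$-subset is $\Omega(C^r)$; combined with the trivial bound $|N_G(R)| \leq n$, a Markov-type argument then gives $\Omega(n^{r-1})$ rich $r$-subsets, and a parallel count yields $\Omega(n^r)$ copies of $K_{r,r}$ in $G$.

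The embedding starts by picking a rich $V_{t_0}$ and extends greedily: at step $i$, one selects a rich $V_{t_i} \subseteq N_G(V_{t_{p(i)}})$ disjoint from the used vertices. The expected main difficulty is the corresponding \emph{local} supersaturation: showing that each $N_G(V_{t_{p(i)}})$ itself contains many rich $r$-subsets. Rich subsets are only $O(1/n)$-dense among all $r$-subsets globally, so a constant-size common neighborhood could in principle contain none. To bypass this, one introduces an auxiliary graph on $V(G)$ whose (weighted) edges record co-membership in rich $r$-subsets, and analyses a random walk on this graph. Comparing its stationary distribution to the uniform distribution on $V(G)$ transfers the global density of rich subsets to any common neighborhood of a walk-typical rich set, so $N_G(V_{t_{p(i)}})$ itself contains a positive density of rich $r$-subsets. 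Since $T_0 \gg rk$, removing the at most $rk$ already-used vertices still leaves an admissible choice for $V_{t_i}$. Iterating through the BFS order then completes the embedding of $T[r]$ into $G$.
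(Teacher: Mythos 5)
Your preparatory steps are fine: the minimum-degree reduction, the convexity count showing constant average codegree of $r$-sets, and the Markov-type counts of rich $r$-sets and of copies of $K_{r,r}$ all work (this is essentially the Erd\H{o}s--Simonovits supersaturation the paper quotes). The gap is exactly at the step you yourself flag as the main difficulty, and the mechanism you propose for it does not deliver it. The workable auxiliary graph (the one the paper uses) has the \emph{$r$-subsets} of $V(G)$ as its vertices, with $U\sim V$ when $U\cup V$ spans a $K_{r,r}$; then $d_{\mathcal{G}}(U)=\binom{d_G(U)}{r}$, so ``rich'' means ``large degree in $\mathcal{G}$'', and --- crucially --- the embedding move ``take a uniformly random $r$-subset of $N_G(U)$'' is precisely one step of the simple random walk on $\mathcal{G}$. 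Your auxiliary graph instead lives on $V(G)$ with edge weights counting co-membership in rich $r$-sets; a walk on single vertices bears no relation to the transition the embedding actually performs (from an $r$-set to an $r$-subset of its common neighbourhood), and comparing its stationary distribution to the uniform distribution on $V(G)$ says nothing about how many rich $r$-subsets sit inside one particular constant-size common neighbourhood.

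There is also a quantifier problem that the missing mechanism would have to resolve. The statement you aim for --- that the common neighbourhood of a (walk-)typical rich set contains a positive density of rich $r$-subsets --- cannot be established set-by-set: a rich set $R$ may have $|N_G(R)|=T_0$ with every $r$-subset of $N_G(R)$ poor, and nothing in the counting prevents this. The correct argument is distributional and on-average over the whole trajectory: one starts the embedding not from an arbitrary rich $V_{t_0}$ but from the stationary distribution of $\mathcal{G}$ (so $\mathbb{P}(f(X_1)=U)=d_{\mathcal{G}}(U)/2e(\mathcal{G})$), makes every later choice uniformly at random, and proves by induction that each embedded $r$-set satisfies $\mathbb{P}(f(X_i)=U)\leq d_{\mathcal{G}}(U)/2e(\mathcal{G})$; since the poor sets carry total stationary mass at most $1/(3k)$ once supersaturation has made $\bar{d}(\mathcal{G})$ a sufficiently large constant, a union bound over the $k$ steps plus a disjointness estimate finishes the embedding. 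Your plan, by contrast, starts at an arbitrary rich set and then chooses each $V_{t_i}$ greedily; after a deterministic greedy choice you retain no distributional control over the next common neighbourhood, so the averaging that compensates for the $O(1/n)$ global density of rich sets is lost. To repair the plan, replace the vertex-level weighted graph by the walk on $r$-sets, start from its stationary distribution, and keep all choices random rather than greedy.
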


Actually, the vertices can be replaced by sets of arbitrary sizes as long as the resulting graph is $r$-degenerate, and the same conclusion holds. We say that a graph $F$ is a blow-up of the graph~$T$ if to get $F$ from $T$ we replace each vertex of $T$ with an independent set (of arbitrary size) and replace each edge of $T$ with a complete bipartite graph. 

\begin{theorem} \label{main4}
    Let $F$ be a graph that is $r$-degenerate and is a blow-up of a tree. Then $$\ex(n,F)=O(n^{2-\frac{1}{r}}).$$
\end{theorem}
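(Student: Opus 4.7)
My plan combines a structural observation with a supersaturation argument based on the proof of Theorem~\ref{main1}. I first establish a \emph{structural lemma}: if the blow-up $F$ of a tree $T$ with blob sizes $(a_v)_{v\in V(T)}$ is $r$-degenerate, then every $v\in V(T)$ with $a_v>r$ satisfies $\sum_{u\sim_T v}a_u\le r$. Indeed, the induced subgraph $F[V_v\cup\bigcup_{u\sim v}V_u]$ is the complete bipartite graph $K_{a_v,\sum_{u\sim v}a_u}$, which is $r$-degenerate only when $\min(a_v,\sum_u a_u)\le r$. An immediate consequence is that the ``large'' vertices $L:=\{v:a_v>r\}$ form an independent set in $T$, and each large blob $V_v$ has boundary $N_F(V_v)=\bigcup_{u\sim v}V_u$ of total size at most $r$.

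Setting $S:=V(T)\setminus L$ and $F':=F[\bigcup_{u\in S}V_u]$, observe that $F'$ is a blow-up of the forest $T[S]$ with every blob of size at most $r$; completing $T[S]$ to a tree $T^*$ on the same vertex set one has $F'\subseteq T^*[r]$, so Theorem~\ref{main1} yields $\ex(n,F')=O(n^{2-1/r})$. Moreover the concatenation ``any $r$-degeneracy order of $V(F')$, followed by the large blobs $V_v$ ($v\in L$) in any order'' is an $r$-degeneracy ordering of $V(F)$: each vertex of $V_v$ has exactly $\sum_{u\sim v}a_u\le r$ earlier neighbours, all in $F'$. Given a host graph $G$ on $n$ vertices with $|E(G)|\ge Cn^{2-1/r}$, reconstructing $F$ from $F'$ amounts to finding an embedding $\phi:F'\hookrightarrow G$ such that for every $v\in L$ the common neighbourhood $c_v(\phi):=|\bigcap_{w\in\phi(N_F(V_v))}N_G(w)|$ exceeds $a_v+|V(F)|$. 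I would establish this by counting $F'$-copies in $G$ weighted by $\prod_{v\in L}(c_v(\phi)-|V(F)|)_{a_v}$ (falling factorial) and showing the total weight is positive.

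The main obstacle is this weighted counting, which requires rerunning the supersaturation-plus-random-walk proof of Theorem~\ref{main1} on an enriched structure rather than merely invoking its statement. When a large vertex $v\in L$ has $|N_T(v)|\ge 2$---as occurs for internal large vertices in, say, the blow-up of a long path with an alternating small/large size pattern---the boundary $N_F(V_v)$ spans blobs $V_{u_1},\ldots,V_{u_t}$ that are not adjacent in the tree $T^*$, so the required codegree control is between non-tree-adjacent blobs of $F'$. The random walk in Theorem~\ref{main1} naturally tracks codegrees along the edges of $T^*$ but not across such ``remote'' pairs; a plain dependent random choice loses the exponent, giving only $n^{2-1/(4r)}$ as in~\cite{Alon}. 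The sharp exponent $2-1/r$ will come from augmenting the walk with an extra sampling step that, for each $v\in L$, conditions on the embedded boundary $\phi(N_F(V_v))$ having large common neighbourhood in $G$, exploiting the structural bound $\sum_{u\sim v}a_u\le r$ to keep the aggregate within the exponent.
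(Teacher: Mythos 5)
Your structural lemma is exactly the observation the paper uses (if a blob has size greater than $r$, the union of its neighbouring blobs has size at most $r$, else $F\supseteq K_{r+1,r+1}$ contradicts $r$-degeneracy), but from that point on your argument has a genuine gap: the entire second half is a plan, not a proof. You need an embedding of $F'$ into $G$ in which, for every large blob $V_v$, the image of the boundary $N_F(V_v)$ (which spans several blobs of $F'$ that are pairwise non-adjacent in the tree) has common neighbourhood of size at least $a_v+|V(F)|$. You correctly identify that the statement of Theorem~\ref{main1} gives no such codegree control across remote blobs and that dependent random choice only yields exponent $2-\frac{1}{4r}$, but the proposed fix --- a weighted count of $F'$-copies with falling-factorial weights, or ``augmenting the walk with an extra sampling step'' --- is never carried out, and it is precisely the hard part. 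As stated, nothing in your write-up shows that the total weight is positive, so the theorem is not proved.

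The paper circumvents this difficulty structurally rather than probabilistically, which is why it proves the more general Theorem~\ref{main2} about $(r,t)$-blownup trees: there each $X_i$ may be an \emph{arbitrary} $r$-subset of \emph{any} earlier $Y_j$, and the random-walk argument already guarantees large $G$-neighbourhoods for all such $X_i$ simultaneously. Theorem~\ref{main4} is then deduced by a purely combinatorial induction on $T$: processing a leaf $u$ at maximum distance from a root, either its neighbour $v$ has $|I(v)|\leq r$ (attach $X_{k+1}=I(v)$, $Y_{k+1}=I(u)$), or $|I(v)|>r$, in which case all neighbours of $v$ except its parent $z$ are leaves and $|I(z)|+\sum_j |I(u_j)|\leq r$; one deletes $v$ and these leaves, applies induction, and then \emph{enlarges} the $Y_i$ containing $I(z)$ to also contain the leaf blobs, so that the whole boundary of $I(v)$ sits inside a single $Y$-set and can serve as $X_{k+1}$ with $Y_{k+1}=I(v)$. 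This packing of the $(\leq r)$-sized boundary into one $Y$-set is exactly the idea your approach is missing: it turns your ``remote pairs'' codegree requirement into an instance of what Theorem~\ref{main2} already handles, with no weighted supersaturation or modified walk needed. If you want to salvage your route, the cleanest repair is to prove (as the paper does) that $F$ embeds into an $(r,t)$-blownup tree and invoke Theorem~\ref{main2}, rather than trying to strengthen Theorem~\ref{main1}.
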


Note that Theorem \ref{main4} is a generalisation of the result of Füredi and West \cite{Furedi-West} on the Tur\'an number $\ex(n, K_{s,s}\setminus K_{s-r, s-r})$. This case corresponds to the blow-up of the path $P_4$. 

In fact, we prove an even more general statement from which Theorem \ref{main4} follows. To state this result, we need to introduce another definition.

\begin{defi}\label{rtblownup}{Let $r\leq t$ and $k$ be positive integers and let $X_1=Y_0,Y_1,Y_2\ldots,Y_k$ be pairwise disjoint sets with $|X_1|=r,|Y_1|=\ldots=|Y_k|=t$. For each $2\leq i\leq k$, let $X_i$ be a subset of some $Y_j$ with $j<i$ such that $|X_i|=r$.
The graph $L$ with vertex set $Y_0\cup Y_1\cup \ldots \cup Y_k$ and edge set $\bigcup_{1\leq i\leq k} \{xy: x\in X_i,y\in Y_i\}$ is called an \emph{$(r,t)$-blownup tree of size $k$}}.
\end{defi}
    
See Figure \ref{rtfigure} for an example of a $(2,3)$-blownup tree of size $4$. 
    


\begin{figure}[h!]
\centering
  \includegraphics[width=10cm]{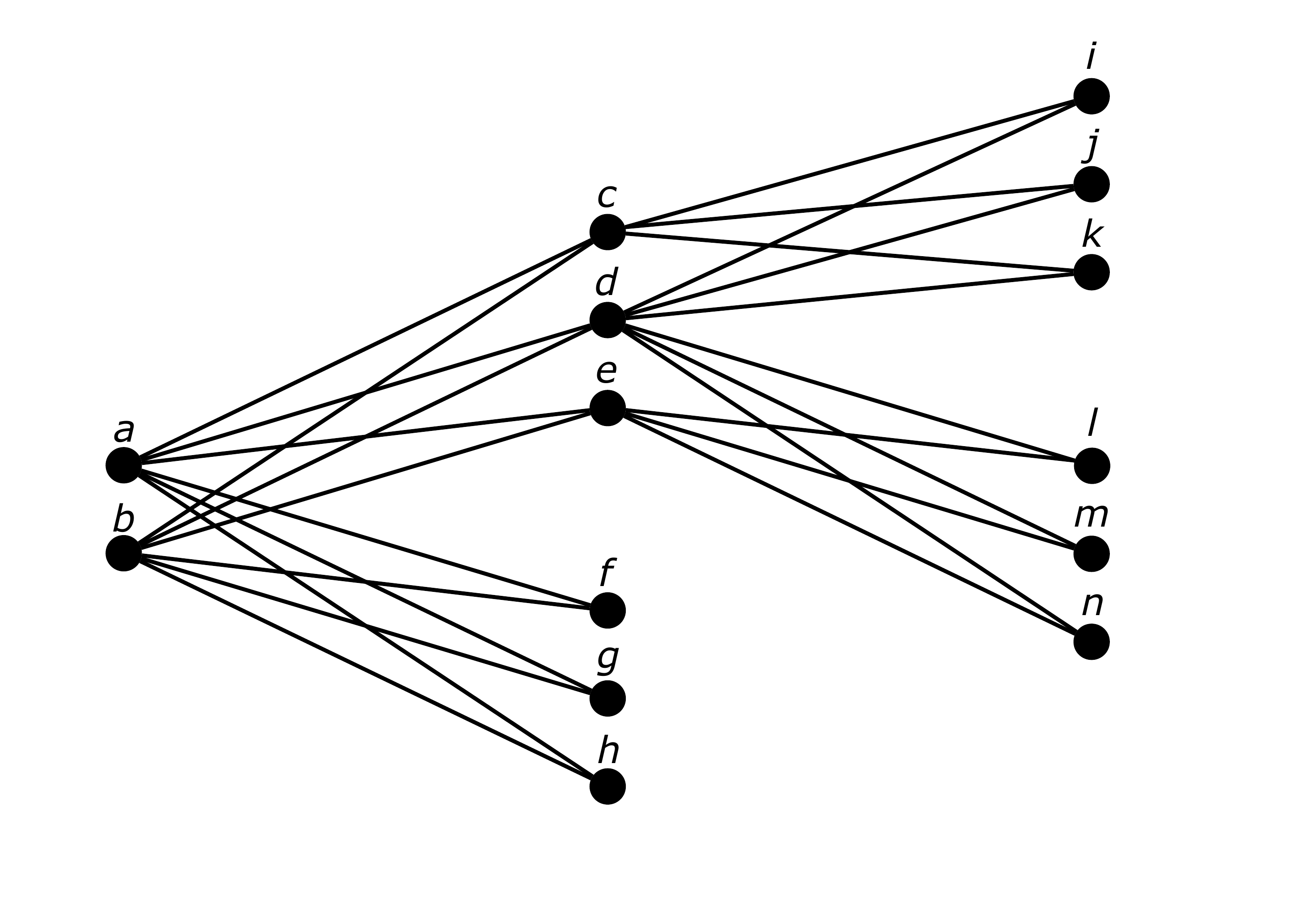}
  
  \caption{\label{rtfigure} A $(2,3)$-blownup tree of size $4$. Here $X_1=Y_0=\{a,b\}$, $X_2=\{a,b\}$, $X_3=\{c,d\}$, $X_4=\{d,e\}$, $Y_1=\{c,d,e\}$, $Y_2=\{f,g,h\}$, $Y_3=\{i,j,k\}$, $Y_4=\{l,m,n\}$.      }

 \end{figure}


Observe that an $(r,t)$-blownup tree is $r$-degenerate. We are now ready to state our most general result.

\begin{theorem} \label{main2}
    Let $L$ be an $(r,t)$-blownup tree of arbitrary size. Then $$\ex(n,L)=O(n^{2-\frac{1}{r}}).$$
\end{theorem}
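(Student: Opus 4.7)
\textit{Proof proposal.} I would prove Theorem \ref{main2} by induction on the size $k$ of the $(r,t)$-blownup tree $L$, strengthening the conclusion to a supersaturation statement and using a random walk to handle the propagation of richness along the tree structure. After the standard reduction to minimum degree $\delta(G) \geq c n^{1 - 1/r}$ (at the cost of a constant factor in the edge count), my target is that $G$ contains at least $c_k n^r$ labelled copies of $L$ for a constant $c_k = c_k(r,t)$. Since $|V(L)| = r + kt$ and $|E(L)| = rkt$, the random graph $G(n, n^{-1/r})$ also contains $\Theta(n^r)$ copies of $L$, so this is the correct order of magnitude.

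The base case $k=0$ is immediate, as $L$ is just an independent $r$-set. For the inductive step, write $L' := L - Y_k$ and let $X_k \subseteq Y_{j(k)}$ be the attachment set, so by induction there are at least $c_{k-1} n^r$ embeddings $\phi\colon L' \hookrightarrow G$, each of which extends to $L$ by any $t$-subset of $N^r(\phi(X_k)) \setminus \phi(V(L'))$. Hence it suffices to lower bound $\sum_\phi \binom{|N^r(\phi(X_k))|}{t}$ by $c_k n^r$. A direct convexity bound is insufficient, because a single very popular $r$-set could carry most of the count. To remedy this I would introduce an auxiliary bipartite graph $H$ whose parts are the set of $L'$-embeddings on one side and the set of $r$-subsets of $V(G)$ on the other, with $\phi$ joined to $S$ iff $\phi(X_k) = S$. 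A random walk on $H$ together with a Kővári–Sós–Turán count on $N^r(S)$ can then be used to convert the edge density of $H$ into the required count of extensions, by showing that many $r$-sets $S$ are hit by many embeddings and simultaneously have large common neighbourhood.

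The subtler point is that $X_k$ may sit inside any ancestor $Y_{j(k)}$, not just the most recent $Y_{k-1}$, so the ``richness'' property must propagate along the whole tree. To handle this I would mirror the tree structure of $L$ by correlated random walks on a product graph, and use a mixing estimate to show that the joint event ``every attachment set $X_i$ is simultaneously rich'' holds for a positive fraction of the $L'$-embeddings. The main obstacle is this joint-concentration step: individual richness at a fixed $X_i$ follows cleanly from supersaturation applied to the subtree rooted at $Y_{j(i)}$, but the correlations between attachment sets in a tree of arbitrary depth demand a mixing bound for the auxiliary walk that does not degrade too quickly in $k$, and establishing such a bound is where I expect the technical heart of the argument to lie.
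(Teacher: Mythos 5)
There is a genuine gap: the step you yourself identify as ``the technical heart'' --- showing that a positive proportion of the $L'$-embeddings (or of the extensions you build) have \emph{all} attachment sets $X_i$ simultaneously rich, however deep in the tree they sit --- is exactly the content of the theorem, and your proposal defers it to an unspecified ``mixing estimate'' for correlated walks on a product graph that is never formulated, let alone proved. Even the single-set version is not as clean as you claim: if you count uniformly over embeddings of $L'$, it is not clear that supersaturation controls the distribution of $\phi(X_k)$ at all, since the uniform measure on embeddings weights $r$-sets in a way that has no a priori relation to their degree in the auxiliary graph of $r$-sets, so ``many $r$-sets are hit by many embeddings and simultaneously have large common neighbourhood'' does not follow from a KST count without substantial extra work. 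The auxiliary bipartite graph $H$ and the counting induction (via a minimum-degree reduction to $\Omega(n^r)$ labelled copies) are plausible scaffolding, but as written the argument stops precisely where the proof has to begin.

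The paper's proof shows that no mixing bound is needed, which is why the difficulty you flag evaporates there. Instead of counting all embeddings, one builds a single random greedy homomorphism $f\colon L\to G$: the image of the root $X_1$ is drawn from the \emph{stationary} distribution of the random walk on the auxiliary graph $\mathcal{G}$ of $r$-sets (probability proportional to $d_{\mathcal{G}}$), and each $Y_i$ is then a uniform $t$-subset of $N_G(f(X_i))$ when this is large enough. A one-step computation, iterated along the tree path from $X_1$ to $X_i$, gives $\mathbb{P}(f(X_i)=U)\leq d_{\mathcal{G}}(U)/\bigl(2e(\mathcal{G})\bigr)$ for \emph{every} $i$, with no loss in the depth; i.e.\ the stationary measure dominates every marginal exactly, so ``richness'' of each $f(X_i)$ fails with probability at most $\frac{1}{3k}$, and a plain union bound over the $k$ attachment sets (plus another union bound for injectivity, using that the relevant common neighbourhoods have size $>10k^2t^2$ while only $O(kt)$ vertices are ever used) yields success with probability at least $1/3$. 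Supersaturation enters only once, to guarantee $\bar d(\mathcal{G})$ is a large constant. If you want to salvage your plan, replace the uniform measure on $L'$-embeddings by this degree-biased measure; then the joint-concentration step you were worried about reduces to a union bound and the product-walk machinery is unnecessary.
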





Note that any bipartite graph $F$ with maximum degree at most $r$ on one side is a subgraph of some $(r,t)$-blownup tree (for a suitable $t$). Indeed, when the parts of $F$ are $X$ and $Y$ such that every vertex in $X$ has degree at most $r$, then $t$ can be chosen to be $|Y|$. This shows that Theorem~\ref{main2} generalises Theorem \ref{semireg}.

The rest of the paper is organised as follows. In Section \ref{sectionthm18} we present the proofs of Theorem \ref{complexity2}, Theorem \ref{main4} and Theorem \ref{main2}, while in Section \ref{sectionfurtherblowup} we discuss further generalisations and related problems.

Let us briefly summarise the method we will use in Section \ref{sectionthm18}. Roughly speaking, we prove that if we randomly and greedily try to embed an $(r,t)$-blownup tree $L$ in the host graph, then with positive probability we do not get stuck. The way we choose the embedded images of the first few vertices of $L$ is not straightforward: we make use of the stationary distribution on an auxiliary graph whose vertices are the $r$-sets of the original host graph. To obtain a dense enough auxiliary graph, we apply results on graph supersaturation. 
The embedding of the further vertices is also closely related to the usual random walk on this auxiliary graph, which allows us to prove that with high probability all $r$-sets that we hit in the random embedding have large enough neighbourhood.


\section{The proofs} \label{sectionthm18}

For a graph $G$, $\overline{d}(G)$ denotes its average degree, $N(v)=N_G(v)$ denotes the set of neighbours of vertex $v$, while the common neighbourhood of a certain vertex set $R$ is denoted by $N(R)=N_G(R)$. We write $d_G(v)=|N_G(v)|$ and $d_G(R)=|N_G(R)|$. 
We call a set of $r$ vertices an $r$-set.

One of the main ingredients of the proofs is a theorem on supersaturated graphs.
Theorems of supersaturation are not only interesting on their own but their application can directly lead to further extremal results. Earlier examples in this direction are due to Füredi \cite{Furedi-cube} on $\ex(n, Q_8)$ and to Erdős and Simonovits \cite{E-Si} on the Turán number $\ex(n, \{C_4, C_5\})$, see also \cite{Sim84}.
We recall the version concerning complete bipartite graphs.

\begin{theorem}[Erdős, Simonovits \cite{Erdos, Furedi-Simonovits}]\label{supersatu} For any positive integers $r\leq  t$  and a real number $\gamma > 0$ there exists a constant $c=c_{r,t}(\gamma)$  such that  any graph on $n$ vertices with $e>c\cdot n^{2-\frac{1}{r}}$ edges contains at least $\gamma \binom{n}{r}$ copies of $K_{r,t}$.
\end{theorem}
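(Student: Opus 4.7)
The plan is to apply the classical K\H{o}v\'ari--S\'os--Tur\'an double counting, then extract supersaturation via a second round of Jensen's inequality. Let $G$ have $n$ vertices and $e>c\,n^{2-1/r}$ edges, and write $d(R)=|N(R)|$ for the common neighbourhood of an $r$-subset $R$ of $V(G)$. Counting pairs $(v,R)$ with $R\subseteq N(v)$ and $|R|=r$ in two ways gives
\[
\sum_R d(R) \;=\; \sum_v \binom{d(v)}{r} \;\ge\; n \binom{2e/n}{r},
\]
the last inequality being Jensen applied to the convex function $\binom{x}{r}$. Since $2e/n > 2c\,n^{1-1/r}$, this yields $\sum_R d(R) \ge A(r)\,c^r n^r$ for some constant $A(r)>0$, and so the average $\overline{d(R)}$ of $d(R)$ over all $\binom{n}{r}$ subsets $R$ is at least $B(r)\,c^r$ for some $B(r)>0$. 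In particular $\overline{d(R)}$ can be made arbitrarily large by choosing $c$ large.

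Next, every $r$-set $R$ with $d(R)\ge t$ is the ``$r$-side'' of $\binom{d(R)}{t}$ copies of $K_{r,t}$, so the number of copies of $K_{r,t}$ in $G$ is at least (a constant factor times) $\sum_R \binom{d(R)}{t}$. A second application of Jensen, this time to the convex function $\binom{x}{t}$, gives
\[
\sum_R \binom{d(R)}{t} \;\ge\; \binom{n}{r} \binom{\overline{d(R)}}{t}.
\]
Since $\binom{\overline{d(R)}}{t}$ tends to infinity with $\overline{d(R)}$, one can then pick $c=c_{r,t}(\gamma)$ large enough that $\binom{\overline{d(R)}}{t}\ge 2\gamma$, producing at least $\gamma\binom{n}{r}$ copies of $K_{r,t}$ in $G$, as required.

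The only technicality is that $\binom{x}{r}$ and $\binom{x}{t}$ are convex only on $[r-1,\infty)$ and $[t-1,\infty)$ respectively, so both uses of Jensen need a mild justification. Fortunately, for $c$ sufficiently large the quantities $2e/n$ and $\overline{d(R)}$ exceed $r$ and $t$ by a wide margin, so the contribution from small-degree vertices (respectively small-degree $r$-sets) is negligible, and a routine truncation argument shows that the Jensen bounds go through. Apart from this bookkeeping, the proof is a direct piece of counting and no serious obstacle arises.
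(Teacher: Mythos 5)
Your argument is correct and is precisely the standard double-counting/convexity proof: the paper does not prove this theorem itself (it cites Erdős and Füredi–Simonovits and remarks only that ``the proof relies on a convexity argument (or Jensen's inequality)''), which is exactly what you carried out, including the observation that $c$ large forces $\overline{d(R)}\gtrsim c^r$. The convexity caveat you flag is indeed routine: extend $\binom{x}{r}$ and $\binom{x}{t}$ by zero below $r-1$ and $t-1$ (keeping convexity), and note that the hypothesis $e>c\,n^{2-1/r}$ is vacuous unless $n>(2c)^r$, so $2e/n$ and $\overline{d(R)}$ are automatically large and both applications of Jensen go through.
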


We only need the former weaker version, but in its full strength, the theorem states that the number of copies is bounded from below by $\gamma' \frac{e^{rt}}{n^{2rt-r-t}}$ with an appropriate  $\gamma'$ provided that $e$ is much larger than the Turán function of $K_{r,t}$. We also note that the connection between $c_{r,t}(\gamma)$ and $\gamma$ is approximately $\gamma\approx \binom{(c/2)^r}{t}$ if $(c/2)^r>t$.

The proof relies on a convexity argument (or Jensen's inequality), and random bipartite graphs show that it is tight up to a constant factor. Note that   $n^{2-\frac{1}{r}}$  is the order of magnitude of the Turán function of $K_{r,t}$. In special cases, the supersaturation is even more understood  when the edge cardinality is in the interval $[\ex(n,K_{r,t}), (1+\varepsilon)\ex(n,K_{r,t})]$, see the paper of 
 the third author~\cite{ZNagy} for exact results in the case $r=t=2$ and on the dependence of $c_{r,t}(\gamma)$ on $\gamma$.





We start with the proof of Theorem \ref{complexity2} which is simpler but already contains some of the ideas needed in the proof of Theorem \ref{main2}.

\begin{proof}[Proof of Theorem \ref{complexity2}]
    Let $m$ be the multiplicity of $F$. It is not hard to see that it suffices to find distinct vertices $u_1,u_2,\ldots,u_{r+m}$ and $v_1,v_2,\ldots,v_{r+m}$ in $V(G)$ such that
    \begin{enumerate}[(i)]
        \item $u_iv_j\in E(G)$ unless $i>r$ and $j>r$;
        \item $d_G(\{u_{i_1},\ldots,u_{i_r}\})\geq |V(F)|$ and $d_G(\{v_{i_1},\ldots,v_{i_r}\})\geq |V(F)|$ for $1\leq i_1<\ldots<i_r\leq r+m$.
    \end{enumerate}
    
    Let $\gamma=2\binom{r+m}{r}\cdot \binom{|V(F)|}{r}$. By Theorem \ref{supersatu}, there exists a constant $c=c_{r,r}(\gamma)$ such that any graph on $n$ vertices with $e>c\cdot n^{2-\frac{1}{r}}$ edges contains at least $\gamma \binom{n}{r}$ copies of $K_{r,r}$.
	
	Let $G$ be any graph with $e>c\cdot n^{2-\frac{1}{r}}$ edges. We assign an auxiliary graph $\mathcal{G}$ to $G$ as follows. The vertices of $\mathcal{G}$ are the $r$-sets in $V(G)$, and two such $r$-sets $U$ and $V$ are joined by an edge in $\mathcal{G}$ if $uv\in E(G)$ for every $u\in U$ and $v\in V$. Clearly, we have $\bar{d}(\mathcal{G})\geq 2\gamma$.
	
	Let us choose a uniformly random edge of $\mathcal{G}$ and let its endpoints be $X$ and $Y$ in uniformly random order. Observe that for any fixed $r$-set $U\in V(\mathcal{G})$, we have $\mathbb{P}(X=U)=\frac{d_{\mathcal{G}}(U)}{2e(\mathcal{G})}$. Let $u_1,\ldots,u_r$ be a uniformly random listing of the elements of $X$ and let $v_1,\ldots,v_r$ be a uniformly random listing of the elements of $Y$. If $d_G(X)\geq r+m$ and $d_G(Y)\geq r+m$, then let $v_{r+1},\ldots,v_{r+m}$ be chosen uniformly at random from $N_G(X)\setminus Y$ without repetition, and similarly, let $u_{r+1},\ldots,u_{r+m}$ be chosen uniformly at random from $N_G(Y)\setminus X$ without repetition (otherwise, let $v_{r+1},\ldots,v_{r+m},u_{r+1},\ldots,u_{r+m}$ be undefined).
	
	It is clear that if $d_G(X)\geq r+m$ and $d_G(Y)\geq r+m$, then these choices satisfy condition (i) above. It remains to be shown that with positive probability condition (ii) is also satisfied.
	
	But note that for any $1\leq i_1<\ldots<i_r\leq r+m$, the set $\{v_{i_1},\ldots,v_{i_r}\}$ is a uniformly random neighbour in $\mathcal{G}$ of $X$, where, as noted above, $\mathbb{P}(X=U)=\frac{d_{\mathcal{G}}(U)}{2e(\mathcal{G})}$. Hence,
	\begin{align}
		\mathbb{P}(\{v_{i_1},\ldots,v_{i_r}\}=V)&=\sum_{\substack{U\sim V \\ d_G(U)\geq r+m}} \mathbb{P}\big(X=U\big)\cdot \frac{1}{d_{\mathcal{G}}(U)} \nonumber \\
		&\leq \sum_{U\sim V} \mathbb{P}\big(X=U\big)\cdot \frac{1}{d_{\mathcal{G}}(U)} \nonumber \\
		&= \sum_{U\sim V} \frac{d_{\mathcal{G}}(U)}{2e(\mathcal{G})}\cdot \frac{1}{d_{\mathcal{G}}(U)} \nonumber \\ 
		&=\frac{d_{\mathcal{G}}(V)}{2e(\mathcal{G})}, \label{eqnstationary}
	\end{align} 
	where we write $U\sim V$ if $U$ and $V$ are neighbours in $\mathcal{G}$.
	
	Now let $\mathcal{S}$ consist of those $V\in V(\mathcal{G})$ for which $d_{\mathcal{G}}(V)\leq \frac{\bar{d}(\mathcal{G})}{4\binom{r+m}{r}}$. By inequality (\ref{eqnstationary}), for every $1\leq i_1<\ldots<i_r\leq r+m$, we have 
	$$\mathbb{P}(\{v_{i_1},\ldots,v_{i_r}\}\in \mathcal{S})\leq \frac{1}{2e(\mathcal{G})}\sum_{V\in \mathcal{S}} d_{\mathcal{G}}(V)\leq \frac{1}{4\binom{r+m}{r}}.$$
	Thus, with probability at least $3/4$, $\{v_{i_1},\ldots,v_{i_r}\}\not \in \mathcal{S}$ for every $1\leq i_1<\ldots<i_r\leq r+m$. Similarly, with probability at least $3/4$, $\{u_{i_1},\ldots,u_{i_r}\}\not \in \mathcal{S}$ holds for every $1\leq i_1<\ldots<i_r\leq r+m$. Hence, with probability at least $1/2$, we have both $\{u_{i_1},\ldots,u_{i_r}\}\not \in \mathcal{S}$ and $\{v_{i_1},\ldots,v_{i_r}\}\not \in \mathcal{S}$ for every $1\leq i_1<\ldots<i_r\leq r+m$. But if $U\not \in \mathcal{S}$, then $d_{\mathcal{G}}(U)>\frac{\gamma}{2\binom{r+m}{r}}\geq \binom{|V(F)|}{r}$. Therefore $d_G(U)\geq |V(F)|$ holds for all such $U$. It follows that with probability at least $1/2$, the vertices $u_1,\ldots,u_{r+m},v_1,\ldots,v_{r+m}$ are well-defined and have properties (i) and (ii).
\end{proof}

We now turn to the proof of Theorem \ref{main2}.

\begin{proof}[Proof of Theorem \ref{main2}]
	Let $k$ be the size of the $(r,t)$-blownup tree and let $\gamma=\frac{3}{2}k\cdot \binom{10k^2t^2}{r}$. By Theorem \ref{supersatu}, there exists a constant $c=c_{r,r}(\gamma)$ such that any graph on $n$ vertices with $e>c\cdot n^{2-\frac{1}{r}}$ edges contains at least $\gamma \binom{n}{r}$ copies of $K_{r,r}$.
	
	Let $G$ be any graph with $e>c\cdot n^{2-\frac{1}{r}}$ edges. Define the auxiliary graph $\mathcal{G}$ as in the proof of Theorem \ref{complexity2}. Clearly, we have $\bar{d}(\mathcal{G})\geq 2\gamma$.
	
	Let us define a random function $f$ which is a partial graph homomorphism $L\rightarrow G$, i.e., if it is defined on $S\subset V(L)$, then it is a graph homomorphism $L\lbrack S\rbrack \rightarrow G$.
	We define $f$ firstly on $X_1$, then on $Y_1$, $Y_2$, \ldots, and finally on $Y_k$.
	
	Let $f(X_1)$ be a random vertex of $\mathcal{G}$ according to the stationary distribution, that is, $f(X_1)=U$ with probability $\frac{d_{\mathcal{G}}(U)}{2e(\mathcal{G})}$. (Once $f(X_1)=U$ is decided, each bijection $X_1\rightarrow U$ is chosen with equal probability.) If $d_G(f(X_1))\geq t$, then let $f(Y_1)$ be a uniformly random $t$-subset of $N_G(f(X_1))$. Otherwise, let $f$ be undefined on $Y_1$.
	
	More generally, for $2\leq i\leq k$, choose $j<i$ such that $X_i\subset Y_j$. If $f$ is undefined on $Y_j$, then declare $f$ to be undefined on $Y_i$. Otherwise, let $U=f(X_i)$. If $d_G(U)<t$, then let $f$ be undefined on $Y_i$, while if $d_G(U)\geq t$, then let $f(Y_i)$ be a uniformly random $t$-subset of $N_G(U)$.
	
	It is clear that this produces a partial graph homomorphism $L\rightarrow G$.

	The key step in our proof is the following claim.
	
\begin{claim}For each $1\leq i\leq k$ and each $U\in V(\mathcal{G})$, $$\mathbb{P}(f(X_i)=U)\leq \frac{d_\mathcal{G}(U)}{2e(\mathcal{G})}.$$
\end{claim}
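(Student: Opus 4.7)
The plan is to prove the claim by induction on $i$, showing that the distribution of $f(X_i)$ is pointwise dominated by the stationary distribution $U \mapsto d_{\mathcal{G}}(U)/(2e(\mathcal{G}))$ on $\mathcal{G}$. It is important to allow a sub-stochastic inequality here rather than an equality, because the process sometimes fails (when $d_G(f(X_j))<t$) and $f(X_i)$ is then left undefined; once domination is the right formulation, the inductive step becomes essentially the calculation (\ref{eqnstationary}) from the proof of Theorem~\ref{complexity2}. The base case $i=1$ holds with equality, directly from the sampling of $f(X_1)$.

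Before the inductive step I would record two elementary facts. First, two $r$-sets $U,V$ are adjacent in $\mathcal{G}$ exactly when $V$ is an $r$-subset of $N_G(U)$ (disjointness is automatic since $G$ has no loops), and hence $d_{\mathcal{G}}(U)=\binom{d_G(U)}{r}$. Second, conditional on $f(X_j)=U$ with $d_G(U)\geq t$, the image $f(Y_j)$ is a uniformly random $t$-subset of $N_G(U)$ paired with a uniformly random bijection $Y_j\to f(Y_j)$, so for the $r$-subset $X_i\subset Y_j$, $f(X_i)$ is a uniformly random $r$-subset of $f(Y_j)$. A one-line ``uniform subset of uniform subset'' computation, using $\binom{n'}{t}\binom{t}{r}=\binom{n'}{r}\binom{n'-r}{t-r}$ with $n'=d_G(U)$, then shows that unconditionally $f(X_i)$ is uniform on $r$-subsets of $N_G(U)$, so that for any $V\sim U$ in $\mathcal{G}$,
\[\mathbb{P}\bigl(f(X_i)=V\mid f(X_j)=U\bigr)=\frac{1}{\binom{d_G(U)}{r}}=\frac{1}{d_{\mathcal{G}}(U)}.\]

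For the inductive step I would pick the unique $j<i$ with $X_i\subset Y_j$ prescribed by the definition of $L$ and $f$, sum over the possible values of $f(X_j)$, and combine the two facts above with the inductive hypothesis:
\begin{align*}
\mathbb{P}\bigl(f(X_i)=V\bigr)
&=\sum_{\substack{U\sim V\\ d_G(U)\geq t}}
\mathbb{P}\bigl(f(X_j)=U\bigr)\cdot\frac{1}{d_{\mathcal{G}}(U)}\\
&\leq \sum_{U\sim V}\frac{d_{\mathcal{G}}(U)}{2e(\mathcal{G})}\cdot\frac{1}{d_{\mathcal{G}}(U)}
=\frac{d_{\mathcal{G}}(V)}{2e(\mathcal{G})}.
\end{align*}
This closes the induction.

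I do not expect a serious obstacle: the statement has been set up so that, once the correct invariant (domination by the stationary distribution) is guessed, the proof is a routine iteration of the random-walk step used in the complexity-$2$ case. The only point worth checking carefully is the symmetry claim that, after marginalising over $f(Y_j)$, the set $f(X_i)$ is uniform over $r$-subsets of $N_G(U)$; this is what tethers the induction to the stationary distribution at every level of the tree $L$.
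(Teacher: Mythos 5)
Your proof is correct and is essentially the paper's argument: the same domination-by-the-stationary-distribution invariant closed by the same one-step random-walk computation, with the paper merely organising the induction along the chain of sets linking $X_1$ to $X_i$ instead of strong induction on $i$, and asserting (without your explicit hypergeometric marginalisation) that $f(X_i)$ is a uniform $r$-subset of $N_G(U)$. The only detail to add is the degenerate case $X_i\subseteq Y_0$, where there is no $X_j$ to sum over; but then $X_i=X_1$ and the base case already gives the bound with equality.
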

\begin{proof}
Fix $1\leq i\leq k$. Observe that there is a sequence $j_1<\ldots<j_\ell=i$ such that $X_{j_1}=X_1$ and for each $1\leq a\leq \ell-1$, we have $X_{j_{a+1}}\subset Y_{j_a}$. We prove by induction on $a$ that for each $1\leq a\leq \ell$ and every $U\in V(\mathcal{G})$, we have $\mathbb{P}(f(X_{j_a})=U)\leq \frac{d_\mathcal{G}(U)}{2e(\mathcal{G})}$. For $a=1$, we have $X_{j_a}=X_1$, so $\mathbb{P}(f(X_{j_a})=U)= \frac{d_\mathcal{G}(U)}{2e(\mathcal{G})}$. For $a\geq 2$, observe that conditional on $f(X_{j_{a-1}})=V$, $f(Y_{j_{a-1}})$ is defined if and only if $d_G(V)\geq t$, and if this holds, then $f(Y_{j_{a-1}})$ is a uniformly random $t$-set in $N_G(V)$. Therefore in this case $f(X_{j_a})$ is a uniformly random $r$-set in $N_G(V)$, so if $U\subset N_G(V)$ then the probability that $f(X_{j_a})=U$ is $\frac{1}{d_{\mathcal{G}}(V)}$. Hence, we have
	\begin{align*}
		\mathbb{P}(f(X_{j_a})=U)&=\sum_{\substack{V\sim U \\ d_G(V)\geq t}} \mathbb{P}\big(f(X_{j_{a-1}})=V\big)\cdot \frac{1}{d_{\mathcal{G}}(V)} \\
		&\leq \sum_{V\sim U} \mathbb{P}\big(f(X_{j_{a-1}})=V\big)\cdot \frac{1}{d_{\mathcal{G}}(V)} \\
		&\leq \sum_{V\sim U} \frac{d_{\mathcal{G}}(V)}{2e(\mathcal{G})}\cdot \frac{1}{d_{\mathcal{G}}(V)} \\
		&=\frac{d_{\mathcal{G}}(U)}{2e(\mathcal{G})},
	\end{align*}
	where we write $V\sim U$ if $U$ and $V$ are neighbours in $\mathcal{G}$.
	This completes the induction step, and the case $a=\ell$ proves the claim.
\end{proof}

	Now let $\mathcal{S}$ consist of those $U\in V(\mathcal{G})$ for which $d_{\mathcal{G}}(U)\leq \frac{\bar{d}(\mathcal{G})}{3k}$. By the claim above, for every $i$, we have $\mathbb{P}(f(X_i)\in \mathcal{S})\leq \frac{1}{2e(\mathcal{G})}\sum_{U\in \mathcal{S}} d_{\mathcal{G}}(U)\leq \frac{1}{3k}$. Thus, with probability at least $1/3$, $f(X_i)\not \in \mathcal{S}$ for every $i$. Moreover, for any $U\in V(\mathcal{G})\setminus \mathcal{S}$ we have $d_G(U)\geq t$, so if $f(X_i)\not \in \mathcal{S}$ for every $i$, then $f$ is defined everywhere.
	
	Suppose that $f(X_i)=U$ for some $U\in V(\mathcal{G})$ with $d_{\mathcal{G}}(U)>\frac{\bar{d}(\mathcal{G})}{3k}$. Then $d_{\mathcal{G}}(U)>\binom{10k^2t^2}{r}$, so $d_G(U)>10k^2t^2$. But $f(Y_i)$ is a uniformly random $t$-subset of $N_G(U)$, and $|f(\bigcup_{0\leq j\leq i-1} Y_j)|\leq kt$, so the probability that $f(Y_i)\cap f(\bigcup_{0\leq j\leq i-1} Y_j)\neq \emptyset$ is at most $\frac{1}{3k}$.
	
	It follows that with probability at least $1/3$, $f$ defines an injective graph homomorphism $L\rightarrow G$, thus $G$ contains $L$ as a subgraph.
\end{proof}

Given Theorem \ref{main2}, it is not hard to deduce Theorem \ref{main4}.
Clearly, it suffices to prove that any $r$-degenerate blow-up of a tree is a subgraph of some $(r,t)$-blownup tree. We will in fact prove the following stronger statement.
    
\begin{claim}
Let $F$ be a blow-up of some tree $T$, and suppose that $F$ is $r$-degenerate. For each $u\in V(T)$, write $I(u)$ for the independent set with which the vertex $u$ is replaced in $F$. Then there exists some $t=t(F)$ and an $(r,t)$-blownup tree $L$ with sets $X_1,\ldots,X_k$, $Y_0,\ldots,Y_k$ as in Definition~\ref{rtblownup} such that there is an embedding of $F$ in $L$ in a way that each $I(u)$ is a subset of some $Y_i$ for $0 \le i \le k$.
\end{claim}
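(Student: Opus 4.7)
The plan is to root $T$ at a carefully chosen vertex and build $L$ incrementally as we BFS-traverse $T$, while maintaining an embedding $\phi$ of $F$ into $L$. First I would extract two structural consequences of the fact that $F$ is $r$-degenerate. Let $S = \{u \in V(T) : |I(u)| \leq r\}$ and $\bar{S} = V(T) \setminus S$. For any edge $uv$ of $T$, the subgraph of $F$ induced on $I(u) \cup I(v)$ is the complete bipartite graph $K_{|I(u)|, |I(v)|}$, whose minimum degree $\min(|I(u)|, |I(v)|)$ must be at most $r$ by $r$-degeneracy; hence at least one of $u,v$ lies in $S$, i.e., $\bar{S}$ is independent in $T$. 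Moreover, for any $w \in \bar{S}$, the subgraph of $F$ induced on $I(w) \cup \bigcup_{v \in N_T(w)} I(v)$ is the complete bipartite graph $K_{|I(w)|, \sum_v |I(v)|}$ (distinct $T$-neighbors of $w$ are pairwise non-adjacent because $T$ is a tree), so once again its minimum degree is at most $r$; since $|I(w)| > r$, this forces $\sum_{v \in N_T(w)} |I(v)| \leq r$.

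Assume $|V(T)| \geq 2$ (the singleton case is trivial), so that $S$ is non-empty, and fix a root $\rho \in S$; set $t = \max\{r, |V(F)|\}$. Initially $L$ consists of $Y_0$ (an $r$-set of dummy vertices) and $Y_1$ (a $t$-set) with $X_1 = Y_0$, and I place $\phi(I(\rho)) \subseteq Y_1$ and set $f(\rho) = 1$. Traversing the remaining vertices of $T$ in BFS order from $\rho$, for each non-root $u$ with parent $p = p(u)$, I distinguish three cases. If both $u, p \in S$, create a new $Y_{k+1}$ of size $t$ and choose $X_{k+1}$ to be any $r$-subset of $Y_{f(p)}$ containing $\phi(I(p))$; place $\phi(I(u)) \subseteq Y_{k+1}$ and set $f(u) = k+1$. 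If $u \in \bar{S}$ (so $p \in S$ automatically), again create a new $Y_{k+1}$ of size $t$, but now choose $X_{k+1}$ to be an $r$-subset of $Y_{f(p)}$ containing $\phi(I(p))$ together with a block of $r - |I(p)|$ currently unused \emph{reserved} vertices of $Y_{f(p)}$, to be filled later. Finally, if $u \in S$ and $p \in \bar{S}$, do not create a new $Y$-set: instead place $\phi(I(u))$ into the reserved block inside $X_{f(p)} \subseteq Y_{f(p(p))}$ and set $f(u) = f(p(p))$.

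Verifying correctness reduces to checking each type of $T$-edge $up$. In the first two cases $Y_{f(u)}$ is directly attached to $Y_{f(p)}$ via $X_{f(u)}$, and $\phi(I(p)) \subseteq X_{f(u)}$, so $\phi(I(p)) \times \phi(I(u)) \subseteq X_{f(u)} \times Y_{f(u)}$ is a set of edges of $L$. In the third case $\phi(I(u))$ lies in $X_{f(p)}$ and $\phi(I(p)) \subseteq Y_{f(p)}$, so $\phi(I(u)) \times \phi(I(p)) \subseteq X_{f(p)} \times Y_{f(p)}$ is again a set of edges of $L$. The reserved block allocated in the second case, of size $r - |I(p)|$, suffices to hold the eventual $\phi$-images of every $S$-child of $u$ because $\sum_{v \in N_T(u) \setminus \{p\}} |I(v)| \leq r - |I(p)|$ by the bound from the opening paragraph applied to the $\bar{S}$-vertex $u$. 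Injectivity of $\phi$ follows from $t \geq |V(F)|$ together with the fact that we always use fresh, unused vertices when placing each $\phi(I(u))$.

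The main obstacle is the third case: when an $S$-vertex $u$ has a $\bar{S}$-parent $p$, the image $\phi(I(p))$ is too large to fit inside any $r$-sized $X$-set of a child of $Y_{f(p)}$, so we cannot simply create a new $Y$-set for $u$ attached to $Y_{f(p)}$. The only way forward is to reverse the direction of attachment in the blownup tree $L$: let $Y_{f(p)}$ be attached to the $Y$-set holding $\phi(I(u))$, which forces $\phi(I(u))$ to live inside the attaching $X$-set $X_{f(p)}$. But $X_{f(p)}$ is fixed when $Y_{f(p)}$ is created — before $u$ itself is processed — so we must pre-allocate enough room in $X_{f(p)}$ to accommodate $\phi(I(p(p)))$ together with the $\phi$-images of every $S$-child of $p$. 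The bound $\sum_{v \in N_T(p)} |I(v)| \leq r$ is precisely what makes this pre-allocation fit inside the $r$ slots of $X_{f(p)}$.
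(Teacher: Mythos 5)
Your argument is correct, and it rests on exactly the two structural consequences of $r$-degeneracy that drive the paper's proof: for every edge $uv$ of $T$ at least one of $|I(u)|,|I(v)|$ is at most $r$, and for a vertex $w$ with $|I(w)|>r$ one has $\sum_{v\in N_T(w)}|I(v)|\leq r$, which is what lets the big blob $I(w)$ be attached through a single $r$-set containing the images of all of its neighbours. Where you differ is the organisation: the paper argues by induction on $|V(T)|$, peeling off either a single deepest leaf or a big blob together with its (necessarily leaf) children, enlarging the relevant $Y_i$ and the parameter $t$ as it goes; you instead fix $t=\max(r,|V(F)|)$ up front and build $L$ in one top-down BFS pass, handling the fact that the children of a big blob are not yet placed by reserving $r-|I(p)|$ slots inside the attaching $X$-set. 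This buys a non-inductive, explicitly algorithmic construction with a concrete value of $t$, at the cost of some bookkeeping. The one step you assert rather than verify is that the reservation in your second case can always be made, i.e.\ that at every moment each $Y_i$ contains at least $r-|I(p)|$ vertices that are neither in the image of $\phi$ nor in a previously reserved (and possibly never fully filled) block; this does hold with your choice of $t$, since each reserved block has size at most $r$ and is created on behalf of a $\bar{S}$-child whose own blob has more than $r$ vertices and is mapped into a different $Y$-set, so used plus reserved vertices in any fixed $Y_i$ can be charged injectively to vertices of $F$ and hence total at most $|V(F)|\leq t$. Adding that short charging argument would make the proof complete.
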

\begin{proof}
The proof is by induction on the size of $T$. If $T$ has one vertex, the assertion is trivial. Now assume that $T$ has at least two vertices. The assertion is straightforward when $T$ is a star, so let us assume that that is not the case. Let $x$ be an arbitrary vertex of $T$ and let $u$ be a vertex with maximum distance from $x$. Clearly $u$ is a leaf. Let $v$ be the unique neighbour of $u$~in~$T$. Since $T$ is not a star, we have $v\neq x$.

If $|I(v)|\leq r$, then by induction there exist integers $t,k$ and an $(r,t)$-blownup tree $L$ with sets $X_1,\ldots,X_{k}$, $Y_0,\ldots,Y_k$ such that there is an embedding of $F-I(u)$ in $L$ in a way that for each $y\in V(T)\setminus \{u\}$, $I(y)$ is a subset of some $Y_i$. In particular, $I(v)$ is a subset of some $Y_i$, so we can take $X_{k+1}=I(v)$ and $Y_{k+1}=I(u)$ to get an embedding of $F$ in an $(r,t')$-blownup tree $L'$ of size $k+1$ with $t'=\max(t,|I(u)|)$.

We may therefore assume that $|I(v)|>r$. Then 
$$\sum_{w\in V(T):\ wv\in E(T)} |I(w)|\leq r,$$
for otherwise $F$ contains $K_{r+1,r+1}$ as a subgraph and so is not $r$-degenerate. Let $z$ be the unique neighbour of $v$ on the path between $v$ and $x$ and let $u_1,\ldots,u_m$ be the other neighbours of $v$. Now $T-\{v,u_1,\ldots,u_m\}$ is a tree, so by induction there exist integers $t,k$ and an $(r,t)$-blownup tree~$L$ with sets $X_1,\ldots,X_{k}$, $Y_0,\ldots,Y_k$ such that there is an embedding of $F-(I(v)\cup \bigcup_{j\leq m} I(u_j))$ in~$L$ in a way that for each $y\in V(T)\setminus \{v,u_1,\ldots,u_m\}$, $I(y)$ is a subset of some $Y_i$. In particular, $I(z)$ is a subset of some $Y_i$. Now if we replace $Y_i$ with $Y'_i=Y_i\cup \bigcup_{j\leq m} I(u_j)$ and set $X_{k+1}=I(z)\cup \bigcup_{j\leq m} I(u_j)\subset Y'_i$ and $Y_{k+1}=I(v)$, then we get an embedding of $F$ in an $(r,t')$-blownup tree $L'$ of size $k+1$ with $t'=\max(t,|Y'_i|,|I(v)|)$.
\end{proof}

\section{Concluding remarks and open problems} \label{sectionfurtherblowup}

 In this paper we were focusing on the extremal number of blow-ups of trees, but it is natural to study the extremal number of the blow-ups of arbitrary graphs. We make the following conjecture, relating the Turán number of a bipartite graph $F$ and that of its blow-up $F[r]$. 
 
 \begin{conj}\label{con} For any $0\leq \alpha\leq 1$ and any graph $F$, if $\ex(n, F)= O(n^{2-\alpha})$, then  $$\ex(n,F[r])= O\left(n^{2-\frac{\alpha}{r}}\right).$$
 \end{conj}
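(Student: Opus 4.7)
The plan is to lift the auxiliary-graph technique of Section~\ref{sectionthm18} from $F=K_{r,r}$ to a general $F$. Given a host graph $G$ on $n$ vertices with $e(G)>Cn^{2-\alpha/r}$, I would form the same auxiliary graph $\mathcal{G}$ as in the proof of Theorem~\ref{complexity2}: its vertices are the $r$-subsets of $V(G)$, with $U\sim V$ in $\mathcal{G}$ precisely when $uv\in E(G)$ for every $u\in U$ and $v\in V$. Two adjacent $r$-sets in $\mathcal{G}$ are automatically disjoint in $G$, so a copy of $F$ in $\mathcal{G}$ whose $r$-sets are pairwise disjoint (even between non-adjacent pairs of $F$) yields a copy of $F[r]$ in $G$. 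The task therefore reduces to producing such a copy.

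First I would use the strong form of supersaturation mentioned after Theorem~\ref{supersatu} to obtain at least $\Omega(e(G)^{r^2}/n^{2r^2-2r})=\Omega(n^{2r-r\alpha})$ copies of $K_{r,r}$ in $G$. Each such copy is an edge of $\mathcal{G}$, and since $|V(\mathcal{G})|=\binom{n}{r}\asymp n^r$, this gives $e(\mathcal{G})=\Omega(|V(\mathcal{G})|^{2-\alpha})$. Choosing $C$ large enough makes this exceed $\ex(|V(\mathcal{G})|,F)=O(|V(\mathcal{G})|^{2-\alpha})$, which is the hypothesis applied to $\mathcal{G}$, and so $\mathcal{G}$ contains at least one copy of $F$.

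The second step has to upgrade ``at least one copy of $F$'' to ``at least one copy on pairwise disjoint $r$-sets''. The cleanest route is via a supersaturation statement for $F$ in $\mathcal{G}$: if one knew that $\mathcal{G}$ contains $\Omega(|V(\mathcal{G})|^{|V(F)|})$ copies of $F$, then since the number of $F$-configurations in which some two of the $r$-sets share at least one vertex is $O(n^{r|V(F)|-1})$, a deletion argument would leave a copy with all $r$-sets pairwise disjoint. A second route, closer in spirit to the paper, is to try the random-walk embedding of Theorem~\ref{main2}: pick $f(x_1)$ at a random $r$-set drawn from the stationary distribution on $\mathcal{G}$, and then embed the remaining vertices of $F$ one at a time by moving to a uniformly random $\mathcal{G}$-neighbour of an already-embedded vertex, controlling typical degrees via the stationary bound as in the key claim of that proof.

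The main obstacle is that neither refinement follows automatically from the bare hypothesis $\ex(n,F)=O(n^{2-\alpha})$. The $F$-supersaturation input required in the first route is essentially a quantitative strengthening of the Tur\'an bound for $F$, and no such black-box principle is known in the generality of the conjecture; the Erd\H{o}s--Simonovits convexity argument delivers it cleanly only for complete bipartite pieces. The random-walk approach works cleanly only when $F$ is a tree, precisely because once $F$ contains a cycle, a single random move from one embedded vertex typically cannot respect the second already-prescribed adjacency at the new vertex; this is the same obstruction that limits Theorem~\ref{main2} to $(r,t)$-blownup \emph{trees}. Closing this gap---either by proving an abstract $F$-supersaturation principle from just the Tur\'an hypothesis, or by designing a biased random embedding that simultaneously preserves all the cycles of $F$---is, in my view, the essential difficulty in establishing Conjecture~\ref{con}.
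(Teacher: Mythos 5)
What you are asked to prove is Conjecture~\ref{con}, which the paper itself does not prove: it is stated in the concluding remarks as an open problem, accompanied only by the heuristic that you reproduce. The paper's sketch is exactly your first paragraph and a half: if $e(G)=\omega\bigl(n^{2-\alpha/r}\bigr)$, supersaturation for $K_{r,r}$ gives $\omega\bigl(N^{2-\alpha}\bigr)$ edges in the auxiliary graph $\mathcal{G}$ on the $N=\binom{n}{r}$ $r$-sets, hence a copy of $F$ in $\mathcal{G}$, hence a homomorphic copy of $F[r]$ in $G$; the authors then explicitly \emph{conjecture} that the embedding can be chosen with the $r$-sets pairwise disjoint. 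So your proposal is not a proof, but it correctly rediscovers the paper's reduction and, importantly, correctly locates the genuine gap in exactly the place the authors leave open: upgrading a homomorphic copy of $F[r]$ to a genuine (injective) copy, equivalently finding a copy of $F$ in $\mathcal{G}$ on pairwise disjoint $r$-sets. Your diagnosis of why the two candidate fixes fail is also sound -- the random-walk embedding of Theorem~\ref{main2} breaks down as soon as $F$ has a vertex whose image must satisfy two previously prescribed adjacencies (i.e.\ once $F$ is not a tree), and no black-box supersaturation for a general $F$ follows from the bare hypothesis $\ex(n,F)=O(n^{2-\alpha})$.

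One technical correction to your first route: the hypothesis ``$\mathcal{G}$ contains $\Omega\bigl(N^{|V(F)|}\bigr)$ copies of $F$'' can never be met when $\alpha>0$, because $\mathcal{G}$ has only $O\bigl(N^{2-\alpha'}\bigr)$ edges for some $\alpha'>0$ (its density tends to $0$), and a graph with $o(N^2)$ edges has $o\bigl(N^{|V(F)|}\bigr)$ copies of any $F$ containing an edge. The realistic target would be a count of order $N^{|V(F)|-\alpha e(F)}$ (the ``random-like'' count at this density), and then the deletion step requires showing that configurations in which two $r$-sets intersect are rarer by a factor of $n$, which does not follow from a crude $O\bigl(n^{r|V(F)|-1}\bigr)$ bound but needs a structured count of the intersecting homomorphic images. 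This is a sharper version of the same open difficulty; as the paper notes, the conjecture is currently known only for trees (Theorems~\ref{main1}, \ref{main4}, \ref{main2}) and for $F=K_{s,t}$, where $F[r]$ is again complete bipartite.
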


 Note that if the number of edges in $G$ is $\omega\left(n^{2-\frac{\alpha}{r}}\right)$, then by supersaturation there are $\omega\left(N^{2-\alpha}\right)$ edges  in the auxiliary graph $\mathcal{G}$, where $N=|V(\mathcal{G})|=\binom{n}{r}$.
 Therefore there exists a copy of $F$ in $\mathcal{G}$, which provides a homomorphic copy of $F[r]$ in $G$. We conjecture that one can always embed $F$ to  $\mathcal{G}$ in a way that the $r$-sets  corresponding to the vertices of $F$ are disjoint, providing an embedding of the blow-up $F[r]$ to $G$. 
 
 We have proved Conjecture \ref{con} for trees. Note that $K_{s,t}[r]=K_{rs,rt}$, so the conjecture also holds for $F=K_{s,t}$, $\alpha=\frac{1}{s}$. It would be interesting to extend this to the family of even cycles. 
 


\end{document}